\tikzstyle{vertex}=[circle, draw, inner sep=0pt, minimum size=5pt]
\newtheorem{theorem}{Theorem}
\newtheorem{corollary}{Corollary}
\newtheorem{proposition}[theorem]{Proposition}
\theoremstyle{definition}
\newtheorem{remark}[theorem]{Remark}
\newtheorem{da-scrivere}[theorem]{da-scrivere}
\def\H{{\cal H}}
\def\U{{\cal U}}
\begin{document}
\title{On a spectral property of one-dimensional representations of compact quantum groups}
\author{Stefano Rossi\footnote{E-mail: rossis@mat.uniroma2.it}
\\
Dipartimento di Matematica,\\ Universit\`a di Roma Tor
Vergata\\ Via della Ricerca Scientifica 1, I--00133 Roma, Italy.}
\date{}
\maketitle

\begin{abstract}
In the $C^*$-algebraic setting the spectrum of any group-like element of a compact quantum group is shown to be a closed subgroup of the one-dimensional torus. A number of consequences of this fact are then illustrated, along with a loose connection with the so-called
Kadison-Kaplansky conjecture.  
\end{abstract}
$$$$
For any unitary representation $u$ of a compact group $G$ acting on a finite-dimensional complex Hilbert space $\H$ the set $u(G)$ is by definition a compact subgroup of the unitary group $\U(\H)$. This applies in particular to one-dimensional representations, for which $u(G)$ may also be identified with 
the spectrum of $u$ understood as an element of  $C(G)$, the $C^*$-algebra of continuous functions on $G$. But then one can take a step further wondering if the property singled out above still holds true for compact {\it quantum} groups as well. It turns out that it does as long as
$C^*$-algebraic compact quantum groups are dealt with. The proof is actually rather quick, although not quite elementary, inasmuch as it uses a non-trivial result concerning the spectrum of the tensor product of two operators on a Hilbert space.  Even so, none of the references we know settles the question explicitly, and this is why 
this brief note has been written.\\

Henceforward by compact quantum group we shall always mean a compact quantum group in the sense of Woronowicz, as it was first defined in \cite{WoroCMP}. This is given by a pair $G=(Q_G, \Delta_G)$ where $Q_G$ is a unital $C^*$-algebra and $\Delta_G: Q_G\rightarrow Q_G\otimes Q_G$ is a coassociative  unital $^*$-homomorphism. A $n$-dimensional unitary representation of $G$ is then a unitary $u=(u_{i,j})$ in $M_n(Q_G)$ such that $\Delta_G(u_{i,j})=\sum_{k=1}^n u_{i,k}\otimes u_{k,j}$ for every $i,j=1, 2,\ldots, n$. Thus a one-dimensional representation is nothing but a unitary $u\in Q_G$ such that
$\Delta_G(u)=u\otimes u$. One-dimensional representations are often referred to as group-like elements, especially when they show up in purely algebraic contexts. Here follows the main result of the present note.
\begin{theorem}[]
If $G=(Q_G,\Delta_G)$ is a compact quantum group and $u\in Q_G$ is a unitary one-dimensional representation of it, then $\sigma(u)\subset\mathbb{T}$ is a closed subgroup.
\end{theorem}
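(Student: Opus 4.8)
The plan is to reduce the theorem to two independent facts: that $\sigma(u)$ is closed under multiplication, and that a nonempty closed multiplicatively-closed subset of $\mathbb{T}$ is automatically a subgroup. Since $u$ is unitary, $\sigma(u)=:S$ is already a nonempty compact subset of $\mathbb{T}$, so closedness and containment in $\mathbb{T}$ come for free; the entire content is to promote $S$ to a subgroup.

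The first step extracts multiplicative closure from the defining relation $\Delta_G(u)=u\otimes u$. Because $\Delta_G:Q_G\to Q_G\otimes Q_G$ is a unital $^*$-homomorphism, it cannot enlarge spectra, so
$$\sigma(u\otimes u)=\sigma(\Delta_G(u))\subseteq\sigma(u)=S.$$
It therefore suffices to bound $\sigma(u\otimes u)$ from below by $S\cdot S=\{zw:z,w\in S\}$. To compute this spectrum I would pass to a faithful representation $\pi:Q_G\to B(\mathcal H)$; then $\pi\otimes\pi$ is a faithful representation of the (minimal) tensor product on $\mathcal H\otimes\mathcal H$ carrying $u\otimes u$ to $\pi(u)\otimes\pi(u)$, and since faithful $^*$-homomorphisms are isometric and preserve spectra, $\sigma(u\otimes u)=\sigma(\pi(u)\otimes\pi(u))$. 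The non-trivial input alluded to in the introduction is that the spectrum of a tensor product of Hilbert-space operators factorizes as the product of the spectra (Brown--Pearcy), which gives $\sigma(\pi(u)\otimes\pi(u))=\sigma(\pi(u))\cdot\sigma(\pi(u))=S\cdot S$. In fact only the inclusion $S\cdot S\subseteq\sigma(\pi(u)\otimes\pi(u))$ is needed, and for the unitary—hence normal—operator $\pi(u)$ this half is elementary: if $\lambda,\mu\in S$ are approximated by unit vectors with $\|(\pi(u)-\lambda)\xi_n\|\to0$ and $\|(\pi(u)-\mu)\eta_n\|\to0$, then $\xi_n\otimes\eta_n$ exhibits $\lambda\mu$ as an approximate eigenvalue of $\pi(u)\otimes\pi(u)$. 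Combined with the previous display this yields $S\cdot S\subseteq S$.

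It then remains to show that a nonempty closed subset $S\subseteq\mathbb{T}$ with $S\cdot S\subseteq S$ is a subgroup. Fix $z\in S$; then $z^n\in S$ for all $n\ge1$, and since $S$ is closed it contains the closure of $\{z^n:n\ge1\}$. If $z$ is a root of unity, say $z^k=1$, then $1=z^k\in S$ and $z^{-1}=z^{k-1}\in S$. Otherwise the positive powers of $z$ are dense in $\mathbb{T}$, forcing $S=\mathbb{T}$, which is a subgroup. In either case $1\in S$ and $S$ is closed under inversion; together with the multiplicative closure already established, $S$ is a closed subgroup of $\mathbb{T}$.

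The only genuinely non-elementary ingredient is the factorization of the spectrum of a tensor product, so I expect the main obstacle to be invoking (or, in the normal case, re-deriving the easy half of) the Brown--Pearcy theorem, together with the bookkeeping needed to realize the abstract $C^*$-element $\Delta_G(u)=u\otimes u$ as a genuine operator via a faithful representation, so that the operator-theoretic statement becomes applicable.
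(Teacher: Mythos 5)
Your proof is correct, and while it shares its skeleton with the paper's argument --- multiplicative closure of $\sigma(u)$ extracted from $\Delta_G(u)=u\otimes u$ together with the fact that unital $^*$-homomorphisms can only shrink spectra, followed by a semigroup-to-group upgrade inside $\mathbb{T}$ --- you replace both of the paper's external inputs with elementary, self-contained arguments. Where the paper invokes the Brown--Pearcy theorem $\sigma(A\otimes B)=\sigma(A)\,\sigma(B)$ in full strength, you observe that only the inclusion $\sigma(\pi(u))\cdot\sigma(\pi(u))\subseteq\sigma(\pi(u)\otimes\pi(u))$ is needed, and that for a normal (here unitary) operator this half follows from a direct approximate-eigenvector computation with vectors $\xi_n\otimes\eta_n$; you also make explicit the passage to a faithful representation of the minimal tensor product, a bookkeeping step the paper leaves implicit. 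Where the paper cites Numakura's theorem that a compact cancellative semigroup is a group, you prove the needed special case for $\mathbb{T}$ directly: each $z\in S$ is either a root of unity, giving $1\in S$ and $z^{-1}=z^{k-1}\in S$, or has dense forward orbit, forcing $S=\mathbb{T}$ outright. (Your case split is per element, but the logic is sound: if any $z\in S$ fails to be a root of unity you are done globally, and otherwise every element contributes its own inverse.) The paper's approach buys brevity through citation; yours buys a fully elementary proof, and in particular shows that the introduction's caveat that the argument is ``not quite elementary'' can actually be dispensed with, since the only deep-looking ingredient, Brown--Pearcy, is never needed beyond its easy half for normal operators.
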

\begin{proof}
Since $\sigma(u)$ is a closed subset of $\mathbb{T}$, we only have to prove that $\sigma(u)$ is a semigroup, namely that it is closed under product. In fact, it is a classic result first proved by Numakura in \cite{Numakura} that a compact cancellative semigroup is in fact a group, and $\sigma(u)$ is definitely such a semigroup, for it is contained in $\mathbb{T}$. To this aim, let $\lambda, \mu\in\sigma(u)$. By virtue of a theorem of Brown and Pearcy, see \cite{Brown}, the spectrum of the Hilbert tensor product of two bounded operators on some Hilbert space is simply given by the product of their spectra, therefore $\lambda \mu\in\sigma(u\otimes u)$. As $\Delta_G(u)=u\otimes u$, we also have $\lambda\mu\in \sigma(\Delta_G(u))$. Because $\Delta_G$ is a *-homomorphism, the inclusion $\sigma(\Delta_G(u))\subset\sigma(u)$ holds, hence $\lambda\mu\in \sigma(u)$, which is what was to be proved.
\end{proof}
The theorem above applies notably to connected compact quantum groups, namely those groups whose dense $*$-Hopf subalgebra contains no finite dimensional $*$-Hopf subalgebras, see \cite{CDPR} for fuller information. Indeed, we have the following corollary.
\begin{corollary}
If $u$ is a non-trivial unitary one-dimensional representation of a connected compact quantum group $G=(Q_G,\Delta_G)$, then $\sigma(u)=\mathbb{T}$.
\end{corollary}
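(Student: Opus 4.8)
The plan is to combine the Theorem with the classification of the closed subgroups of $\mathbb{T}$. By the Theorem, $\sigma(u)$ is a closed subgroup of $\mathbb{T}$, and every closed subgroup of $\mathbb{T}$ is either the full circle or a finite cyclic group, namely the group $\mu_n$ of $n$-th roots of unity for some $n\geq 1$. Since the whole point is to show $\sigma(u)=\mathbb{T}$, it suffices to rule out the possibility that $\sigma(u)$ is finite, and this is precisely where the hypothesis of connectedness must enter.

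So I would argue by contradiction, assuming $\sigma(u)=\mu_n$. First note that $n\geq 2$: were $n=1$, one would have $\sigma(u)=\{1\}$, and since $u$ is unitary, hence normal, the continuous functional calculus would force $u=1$, against the non-triviality hypothesis. Still by functional calculus, the polynomial $z^n-1$ vanishes identically on $\sigma(u)=\mu_n$, whence $u^n=1$. The natural candidate for a forbidden subobject is then the linear span $A$ of $\{1,u,u^2,\ldots,u^{n-1}\}$. This is a finite-dimensional subspace; it is a $*$-subalgebra because $u^*=u^{-1}=u^{n-1}$ together with $u^n=1$; it is a subcoalgebra because $\Delta_G(u^k)=u^k\otimes u^k$ for each $k$; and it is stable under the counit and the antipode, the latter acting by $S(u^k)=u^{n-k}$. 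Thus $A$ is a finite-dimensional $*$-Hopf subalgebra of dimension exactly $n\geq 2$, hence non-trivial.

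The step requiring the most care --- and the only genuinely non-formal point --- is to locate $A$ inside the correct algebra. Connectedness is a statement about the canonical dense $*$-Hopf subalgebra $\mathrm{Pol}(G)$ of $Q_G$ spanned by the matrix coefficients of finite-dimensional representations, so I must check that $u\in\mathrm{Pol}(G)$; this is immediate, as $u$ is itself the single matrix coefficient of a one-dimensional representation. Consequently $A\subseteq\mathrm{Pol}(G)$, and the previous paragraph exhibits $A$ as a non-trivial finite-dimensional $*$-Hopf subalgebra of $\mathrm{Pol}(G)$, contradicting the very definition of connectedness. Therefore the finite case cannot occur, and $\sigma(u)=\mathbb{T}$, as claimed.
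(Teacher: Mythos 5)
Your proof is correct and follows essentially the same route as the paper's: both combine the Theorem with connectedness, ruling out a finite spectrum by recognizing the algebra generated by the group-like element $u$ as a finite-dimensional $*$-Hopf subalgebra that connectedness forbids. If anything, your version is a touch more careful than the paper's, which applies connectedness directly to the $C^*$-subalgebra $Q_u\subset Q_G$ generated by $u$, whereas you explicitly place the offending finite-dimensional Hopf subalgebra (the span of $1,u,\ldots,u^{n-1}$, using $u^n=1$) inside the dense polynomial $*$-Hopf subalgebra, which is where the definition of connectedness actually lives.
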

\begin{proof}
Let $Q_u\subset Q_G$ the $C^*$-subalgebra generated by $u$. From $\Delta_G(u)=u\otimes u$ the inclusion $\Delta_G(Q_u)\subset Q_u\otimes Q_u$ follows almost immediately. This says that $Q_u\subset Q_G$ is a $*$-Hopf-subalgebra. As such, it must be infinite-dimensional thanks to connectedness. Since $Q_u$ is isomorphic with $C(\sigma(u))$, the spectrum of $u$ is then forced to be an infinite set. Accordingly, $\sigma(u)$ is an infinite closed subgroup of $\mathbb{T}$. The thesis now easily follows, for $\mathbb{T}$ is the only infinite closed subgroup of $\mathbb{T}$.
\end{proof}
Interesting examples  of quantum groups are provided by discrete groups. Indeed, if $\Gamma$ is any such group, the reduced $C^*$-algebra $C_r^*(\Gamma)$ can be easily turned into a compact quantum group, of which $\Gamma$ should be thought of as the dual object. Moreover, $C_r^*(\Gamma)$ turns out to be connected precisely when $\Gamma$ is a torsion-free group, in a way that closely parallels a classical well-known result of Pontryagin, cf. \cite{CDPR} and the references therein.  Before moving to the next result,
we briefly recall what the reduced $C^*$-algebra of a discrete group $\Gamma$ is, as to fix the notations needed to follow the relative proof. This is simply the concrete $C^*$-subalgebra of $B(\ell_2(\Gamma))$ generated by the unitary operators $u_\gamma$, where $\gamma$ runs over $\Gamma$, whose action on the canonical orthonormal basis $\{\delta_\alpha:\alpha\in\Gamma\}$ of $\ell_2(\Gamma)$ is given by left translation, i.e. $u_\gamma \delta_\alpha\doteq \delta_{\gamma\alpha}$.
\begin{corollary}
Let $\Gamma$ be a torsion-free discrete group and let $C_r^*(\Gamma)$ the corresponding reduced C*-algebra. Then $\sigma(u_{\gamma})=\mathbb{T}$ for any $\gamma\in \Gamma$.
\end{corollary}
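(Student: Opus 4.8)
The plan is to recognize $(C_r^*(\Gamma),\Delta)$ as a compact quantum group and each $u_\gamma$ as a group-like element, so that the preceding corollary applies directly. As recalled above, $C_r^*(\Gamma)$ becomes a compact quantum group once equipped with the comultiplication $\Delta$ determined on generators by $\Delta(u_\gamma)=u_\gamma\otimes u_\gamma$, and I would take this structure as given, citing \cite{CDPR}. But then the very form of $\Delta$ on the generators shows that each $u_\gamma$ is a unitary satisfying $\Delta(u_\gamma)=u_\gamma\otimes u_\gamma$, that is, a one-dimensional representation of $G=(C_r^*(\Gamma),\Delta)$ in the sense introduced earlier.

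With this observation in hand, the argument would proceed as follows. First I would use the hypothesis that $\Gamma$ is torsion-free to conclude, again by \cite{CDPR}, that $C_r^*(\Gamma)$ is connected. Next, for $\gamma\neq e$ I would note that $u_\gamma$ is a \emph{non-trivial} group-like element: since the left regular representation is faithful, $u_\gamma=1$ forces $\gamma=e$, whence $u_\gamma\neq 1$ as soon as $\gamma\neq e$. At this stage the hypotheses of the preceding corollary are all met, and it delivers $\sigma(u_\gamma)=\mathbb{T}$ at once.

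The single point deserving care is the element $\gamma=e$, for which $u_e=1$ and so $\sigma(u_e)=\{1\}\neq\mathbb{T}$; the statement is therefore to be read as concerning the non-trivial elements of $\Gamma$, the identity being the only torsion element of a torsion-free group. Beyond this harmless exclusion no real obstacle arises, the entire content being already packaged in the two facts borrowed from \cite{CDPR}, namely that the comultiplication on $C_r^*(\Gamma)$ is well defined and that torsion-freeness amounts to connectedness. Should one wish for a route bypassing that corollary, one could argue directly: for $\gamma$ of infinite order the elements $u_{\gamma^n}=u_\gamma^{\,n}$, with $n\in\mathbb{Z}$, are pairwise distinct translation operators and hence linearly independent in $C_r^*(\Gamma)$, so the commutative $C^*$-algebra they generate, isomorphic to $C(\sigma(u_\gamma))$, is infinite-dimensional; thus $\sigma(u_\gamma)$ is infinite, and being a closed subgroup of $\mathbb{T}$ by the Theorem it can only be $\mathbb{T}$.
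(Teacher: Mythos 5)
Your proof follows exactly the paper's own route: equip $C_r^*(\Gamma)$ with the coproduct determined by $\Delta(u_\gamma)=u_\gamma\otimes u_\gamma$, use the fact from \cite{CDPR} that torsion-freeness of $\Gamma$ gives connectedness of the resulting compact quantum group, and apply the preceding corollary to the group-like elements $u_\gamma$. You are in fact somewhat more careful than the paper's proof, which silently passes over both the non-triviality hypothesis of that corollary (your faithfulness-of-the-regular-representation argument supplies it for $\gamma\neq e$) and the case $\gamma=e$, where the statement as literally written fails because $\sigma(u_e)=\{1\}$, so your reading of the corollary as concerning non-identity elements is the correct one.
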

\begin{proof}
We shall think of $C_r^*(\Gamma)$ as a cocommutative compact quantum group whose coproduct is simply obtained by extending $\Delta_{\Gamma}(u_{\gamma})=u_{\gamma}\otimes u_{\gamma}$. By its very definition, each $u_{\gamma}$ is a unitary
one-dimensional representation. Since $\Gamma$ is a torsion free, the pair $(C_r^*(\Gamma), \Delta)$ is a connected quantum group, hence the foregoing corollary applies to $u_{\gamma}$.
\end{proof}
\begin{remark}
Besides the reduced $C^*$-algebra, one can also consider the universal $C^*$-algebra of a discrete group $\Gamma$. This is the enveloping $C^*$-algebra of the convolution algebra $\ell^1(\Gamma)$, which is only a unital involutive Banach algebra. It is usually
denoted by $C^*(\Gamma)$. Since by definition there exists a surjective $^*$-homomorphis $\pi: C^*(\Gamma)\rightarrow C^*_r(\Gamma)$, the above result applies of course to $u_\gamma$ understood as an element of $C^*(\Gamma)$ as well.
\end{remark}
For instance, if we apply the former corollary to $\mathbb{Z}$ we recover the very well-known fact that the bilateral shift $U$ on $\ell_2(\mathbb{Z})$ has full spectrum. However, to the best of our knowledge there seems to be no straightforward way to prove the result in general without resorting to some basic quantum group theory, as we did. In fact, this is arguably an interesting if possibly not unprecedented example of a result on quantum groups shedding light on spectral theory. We feel this is worth mentioning here because it is operator algebra techniques that are generally helpful in developing the theory of compact groups and not the other way around. Obviously, there do exist different 
instances of interplay of this sort, such as the result pointed out by Skandalis that a compact quantum group is coamenable if and only if $\textrm{dim}(u)$ is in the spectrum of $\chi_u$ for any representation $u$, where $\chi_u$ is the associated character. Moreover, the former result might also be a little step towards the solution in the positive of the long-standing Kadison-Kaplansky conjecture, which asks whether the reduced $C^*$-algebra of a torsion-free group is projectionless, i.e. devoid of non-trivial projections. Of course this would amount to proving that the spectrum of every unitary is a connected subset of the one-dimensional torus, which is unfortunately out of the reach of our result. In fact, it only says this is positively true of a remarkable class of unitaries, the $u_{\gamma}$'s we dealt with above. However, for those unitaries the result tells us a bit more. For not only is their spectrum shown to be connected but it is also thoroughly computed.  As for the conjecture, we cannot not say a word or two about the present state of the art. This must needs include the basic information that $C^*_r(\mathbb{F}_2)$ was first proved to be projectionless by Cohen in \cite{Cohen}, where $\mathbb{F}_2$ is the free group on two generators. Later proofs then followed, such as those of Pimsner-Voiculescu \cite{Pimsner}, Connes \cite{Connes}, and Effros \cite{Effros}. As well as the single but important example of $\mathbb{F}_2$, vast classes of groups are also known for which the corresponding reduced $C^*$-algebras are projectionless. Notably, groups of polynomial growth all satisfy the conjecture, as proved by Ji \cite{Ji} in 1992. Ten years later all word-hyperbolic groups, too, were eventually shown to satisfy the conjecture in a deep work by Puschnigg \cite{Puschnigg}.\\

Going back to the last result, it is worthwhile to observe that it continues to hold for higher-dimensional representations of cocommutative groups.
\begin{corollary}
If $u$ is any non-trivial representation of a connected cocommutative quantum group $G$, then $\sigma(u)=\mathbb{T}$.
\end{corollary}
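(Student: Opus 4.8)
The plan is to reduce the statement to the one-dimensional case already settled, by exploiting the very rigid representation theory of cocommutative compact quantum groups. Recall that cocommutativity of $G=(Q_G,\Delta_G)$ means precisely that $G$ is the dual of a genuine discrete group $\Gamma$, so that the underlying dense Hopf $*$-algebra is nothing but the group algebra $\mathbb{C}\Gamma$, and connectedness of $G$ amounts, as discussed above, to torsion-freeness of $\Gamma$. The crucial structural input I would use is that every irreducible unitary representation of a cocommutative compact quantum group is one-dimensional, being one of the group-like elements $u_\gamma$ with $\gamma\in\Gamma$; equivalently, a finite-dimensional $\mathbb{C}\Gamma$-comodule is just a $\Gamma$-graded vector space, whose isotypic components are the one-dimensional subcomodules attached to the various $\gamma$.

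First I would invoke Woronowicz's Peter--Weyl theory to decompose the given unitary representation $u$ into a direct sum of irreducibles. By the remark above these irreducibles are all of the form $u_\gamma$, so there is a \emph{scalar} unitary $V\in M_n(\mathbb{C})\subset M_n(Q_G)$ such that $V^*uV=\mathrm{diag}(u_{\gamma_1},\dots,u_{\gamma_n})$ for suitable $\gamma_1,\dots,\gamma_n\in\Gamma$. Since conjugation by a unitary leaves the spectrum unchanged, and the spectrum of a block-diagonal element equals the union of the spectra of its blocks, I would then record the identity $\sigma(u)=\bigcup_{i=1}^{n}\sigma(u_{\gamma_i})$.

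To finish, I would unwind what non-triviality of $u$ means: a representation is trivial exactly when it is unitarily equivalent to a direct sum of copies of the trivial representation $u_e=1$, that is, when every $\gamma_i$ equals the identity $e$ of $\Gamma$. Hence if $u$ is non-trivial, at least one index $i$ satisfies $\gamma_i\neq e$. For that index, $u_{\gamma_i}$ is a non-trivial one-dimensional representation of the connected compact quantum group $G$, so the first corollary above applies directly and yields $\sigma(u_{\gamma_i})=\mathbb{T}$. Because $u$ is unitary we already know $\sigma(u)\subset\mathbb{T}$, and the inclusion $\mathbb{T}=\sigma(u_{\gamma_i})\subset\sigma(u)$ just obtained forces $\sigma(u)=\mathbb{T}$.

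I expect the only genuinely non-routine point to be the appeal to the representation theory of cocommutative compact quantum groups, namely that the irreducibles are exhausted by the group-like elements $u_\gamma$. Everything else -- the direct-sum decomposition, and the behaviour of the spectrum under unitary conjugation and under direct sums -- is standard, and would only require the careful observation that the intertwining unitary can be taken with scalar entries, so that the whole decomposition takes place inside the one fixed algebra $M_n(Q_G)$ in which $\sigma(u)$ is computed.
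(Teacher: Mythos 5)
Your proof is correct and follows essentially the same route as the paper's: decompose $u$ into irreducible (hence, by cocommutativity, one-dimensional) subrepresentations, note that non-triviality forces at least one summand to be non-trivial, use $\sigma(u)=\bigcup_i\sigma(u_i)$, and invoke the earlier corollary on one-dimensional representations of connected quantum groups. The extra details you supply -- the identification with a dual discrete group and the observation that the intertwining unitary has scalar entries -- are accurate refinements of steps the paper leaves implicit, not a different argument.
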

\begin{proof}
By complete reducibilty, we can rewrite $u$ as a direct sum of irreducible subrepresentations, say $u=u_1\oplus u_2\oplus\dots\oplus u_n$. Since $G$ is cocommutative, each $u_i$ is one-dimensional. In addition, at least  one of them must be non-trivial. The thesis now follows from $\sigma(u)=\cup_{i=1}^n\sigma(u_i)$, which is immediately verified.
\end{proof}
As a matter of fact, the above result cannot be generalized any further. Indeed, for a higher-dimensional unitary representation $u\in U(M_n(Q_G))$ it is no longer true that $\sigma(u)\subset\mathbb{T}$ is a subgroup. For the property fails to be true already in the classical case, where $\sigma(u)$ is  nothing but the set $\bigcup_{g\in G}\sigma(u(g))$.\\ 

Finally, we would like to conclude by giving yet another application of our main result. This is a simple proof that in the topolgical framework of Woronowicz compact quantum groups do not have non-trivial primitive elements, which should be a well-known fact, although not to be easily found in the literature.   
\begin{proposition}
Let $G=(Q_G,\Delta)$ be a compact quantum group. If $a\in Q_G$ is a self-adjoint element such that $\Delta(a)=a\otimes 1+1\otimes a$, then $a=0$.
\end{proposition}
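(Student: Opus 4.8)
The plan is to reduce the statement to the main Theorem by exponentiating $a$. Since $a$ is self-adjoint, for every real $t$ the element $u_t:=\exp(ita)$ is a unitary of $Q_G$, and the first thing I would check is that each $u_t$ is a one-dimensional representation. The element $ta$ is again self-adjoint and primitive, $\Delta(ta)=ta\otimes 1+1\otimes ta$, and since the two summands commute in $Q_G\otimes Q_G$, the multiplicativity and continuity of $\Delta$ should give
$$\Delta(u_t)=\exp(it\Delta(a))=\exp(ita\otimes 1)\exp(1\otimes ita)=(\exp(ita)\otimes 1)(1\otimes\exp(ita))=u_t\otimes u_t.$$
Thus $u_t$ is group-like, and the Theorem applies: $\sigma(u_t)$ is a closed subgroup of $\mathbb{T}$ for every $t\in\mathbb{R}$.

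The second step is to exploit this for a single well-chosen $t$. Suppose toward a contradiction that $a\neq 0$ and set $M:=\|a\|>0$; because $a$ is self-adjoint its norm equals its spectral radius, so there is $\lambda^*\in\sigma(a)$ with $|\lambda^*|=M$, and replacing $a$ by $-a$ (still self-adjoint and primitive) I may assume $\lambda^*=M$. Choosing $t=\pi/(2M)$ and invoking the spectral mapping theorem, $\sigma(u_t)=\{e^{it\lambda}:\lambda\in\sigma(a)\}$ contains $e^{itM}=e^{i\pi/2}=i$. Since $\sigma(u_t)$ is a subgroup it is closed under multiplication, so $-1=i^2\in\sigma(u_t)$; hence there is $\nu\in\sigma(a)$ with $e^{it\nu}=-1$, that is $t\nu\equiv\pi\pmod{2\pi}$. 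But $|\nu|\le M$ forces $|t\nu|\le\pi/2$, which is incompatible with $t\nu\equiv\pi\pmod{2\pi}$. This contradiction yields $M=0$, whence $\sigma(a)=\{0\}$ and, $a$ being self-adjoint, $a=0$.

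The conceptual heart is the passage from the additive condition on $a$ to the multiplicative one on $u_t$, which is precisely what turns a statement about primitive elements into one about group-like elements to which the Theorem speaks. The point I would handle most carefully is the identity $\Delta(\exp(ita))=u_t\otimes u_t$: it rests on the commutation of $a\otimes 1$ and $1\otimes a$ together with the fact that the $\ast$-homomorphism $\Delta$, being contractive, commutes with the norm-convergent exponential series. Once this is in place the contradiction is immediate; in fact any $t$ with $0<tM<2\pi/3$ would do equally well, and the value $t=\pi/(2M)$ is chosen only to make the arithmetic transparent.
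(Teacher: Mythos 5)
Your proof is correct and follows essentially the same route as the paper: exponentiate the primitive element to get the group-like unitaries $u_t=e^{ita}$, apply the Theorem to see that $\sigma(u_t)$ is a subgroup of $\mathbb{T}$, and combine the spectral mapping theorem with the fact that a subgroup confined to a small arc must be trivial. Your version merely makes explicit what the paper leaves terse, namely the verification that $\Delta(u_t)=u_t\otimes u_t$ via the commuting exponentials and the concrete choice $t=\pi/(2\|a\|)$ producing the contradiction $i\in\sigma(u_t)$ but $-1=i^2\notin\sigma(u_t)$.
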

\begin{proof}
If $a=a^*$ satisfies $\Delta(a)=a\otimes 1+1\otimes a$, then $ta$ satisfies the same relation for every $t\in\mathbb{R}$. Therefore, $u_t\doteq e^{ita}$ is a one-parameter group of unitary one-dimensional representations of $G$. In particular, $\sigma(u_t)\subset \mathbb{T}$
is a subgroup for each $t$. Since $\sigma(ta)\subset[-t\|a\|, t\|a\|]$, by the spectral mapping theorem we also have $\sigma(u_t)\subset\{e^{i\theta}: \theta\in[-t\|a\|, t\|a\|]\}$. Hence, if $t$ is small enough, $\sigma(u_t)$ cannot be a sugroup, unless it is zero, in which case $u=1$, that is to say $a=0$.
\end{proof}
\begin{remark}
The classical counterpart of the above proposition needs no proof. Indeed, it reduces to the easily verified fact that any continuous homomorphism from a compact group $G$ into $(\mathbb{R},+)$ is trivial.
\end{remark}
In purely algebraic contexts, nevertheless, Hopf algebras will have a wealth of primitive elements. The simplest examples are provided by cocommutative algebras, whose structure is completely described by a well-known result of Cartier, Gabriel and Kostant.
This says that any cocommutative Hopf algebra $H$ over an algebraically closed field $\mathbb{K}$ of characteristic zero 
is in fact a suitable product of  $\mathbb{K}G$, the group algebra of a discrete group $G$, and $U(\mathfrak{g})$, the universal enveloping algebra of a Lie algebra $\mathfrak{g}$ acted upon by $G$, which is recovered, no surprise, as the set of the group-like elements of $H$.\\ 

\emph{Acknowledgments} I owe a debt of gratitude to Alessandro D'Andrea and Claudia Pinzari, without the advise and suggestions of whose this work would not have come into existence.

\end{document}